\numberwithin{equation}{section}
\definecolor{plum}{rgb}{0.8,0.2,0.8}
\title{\textbf{Non-Degeneracy of Kobayashi Volume Measures for Singular Directed Varieties}}
\author{\textsc{Ya Deng}}
\date{}
\begin{document}
\maketitle

\theoremstyle{definition}
\newtheorem*{pf}{Proof}
\newtheorem{thm}{Theorem}[section]
\newtheorem{rem}{Remark}[section]
\newtheorem{problem}{Problem}[section]
\newtheorem{conjecture}{Conjecture}[section]
\newtheorem{lem}{Lemma}[section]
\newtheorem{cor}{Corollary}[section]
\newtheorem{dfn}{Definition}[section]
\newtheorem{proposition}{Proposition}[section]
\newtheorem{example}{Example}[section]

\def\oc{\mathcal{O}} \def\oce{\mathcal{O}_E} \def\xc{\mathcal{X}}
\def\ac{\mathcal{A}} \def\rc{\mathcal{R}} \def\mc{\mathcal{M}}
\def\wc{\mathcal{W}} \def\fc{\mathcal{F}} \def\cf{\mathcal{C_F^+}}
\def\jc{\mathcal{J}}
\def\ic{\mathcal{I}}
\def\kc{\mathcal{K}}
\def\cb{\mathbb{C}}
\def\vol{\operatorname{vol}}
\def\ord{\operatorname{ord}}
\def\Im{\operatorname{Im}}
\def\dm{\mathrm{d}}

\def\as{{a^\star}} \def\es{e^\star}

\def\tl{\widetilde} \def\tly{{Y}} \def\om{\omega}

\def\cb{\mathbb{C}} \def\nb{\mathbb{N}} \def\nbs{\mathbb{N}^\star}
\def\pb{\mathbb{P}} \def\pbe{\mathbb{P}(E)} \def\rb{\mathbb{R}}
\def\zbb{\mathbb{Z}}

\def\hb{\bold{H}} \def\fb{\bold{F}} \def\eb{\bold{E}}
\def\pbb{\bold{P}}

\def\nab{\overline{\nabla}} \def\n{|\!|} \def\spec{\textrm{Spec}\,}
\def\cinf{\mathcal{C}_\infty} \def\d{\partial}
\def\db{\overline{\partial}}
\def\hess{\sqrt{-1}\partial\!\overline{\partial}}
\def\zb{\overline{z}} \def\lra{\longrightarrow}

\begin{abstract}
In this note, we prove the \emph{generic} Kobayashi volume measure hyperbolicity of singular directed varieties $(X,V)$, as soon as the canonical sheaf $\mathcal{K}_V$ of $V$ is big in the sense of Demailly. 
\end{abstract}

%%%%%%%%%%%%%%%%%%%%%%%%%%%%%%%%%%%%%%%%%%%%%
\section{Introduction}

Let $(X,V)$ be a \emph{complex directed manifold}, i.e $X$ is a complex manifold equipped with a holomorphic subbundle $V\subset T_X$. Demailly's philosophy in introducing directed manifolds is that, there are certain fonctorial constructions which work better in the category of directed manifolds (ref. \cite{Dem12}), even in the ``absolute case", i.e. the case $V=T_X$. Therefore, it is usually inevitable to allow singularities of $V$, and $V$ can be seen as a coherent subsheaf of $T_X$ such that $T_X/V$ is torsion free. In this case $V$ is a subbundle of $T_X$ outside an analytic subset of codimension at least $2$, which is denoted by $\text{Sing}(V)$. The Kobayashi volume measure can also be defined for (singular) directed manifolds.
\begin{dfn}\label{Kobayashi volume}
	Let $(X,V)$ be a directed manifold with $\text{dim}(X)=n$ and $\text{rank}(V)=r$. Then the Kobayashi volume measure of $(X,V)$ is the pseudometric defined on any $\xi\in \wedge^rV_x$ for $x\notin \text{Sing}(V)$, by
	$$
\mathbf{e}^r_{X,V}(\xi):=\text{inf}\{\lambda>0;\ \exists f:\mathbb{B}_r\rightarrow X,\ f(0)=x,\ \lambda f_{*}(\tau_0)=\xi, \ f_*(T_{\mathbb{B}_r})\subset V \},
$$
where $\mathbb{B}_r$ is the unit ball in $\cb^r$ and $\tau_0=\frac{\d}{\d t_1}\wedge \cdots \frac{\d}{\d t_r}$ is the unit $r$-vector of $\cb^r$ at the origin. We say that $(X,V)$ is \emph{generic volume measure hyperbolic} if $\mathbf{e}^r_{X,V}$ is generically positive definite.
\end{dfn}

In \cite{Dem12} the author also introduced the definition of \emph{canonical sheaf} $\mathcal{K}_V$ for any singular directed variety $(X,V)$, and he showed that the ``bigness" of $\mathcal{K}_V$ will imply that, any non-constant entire curve $f:\cb \rightarrow (X,V)$ must satisfy certain global
algebraic differential equations. In this note, we will study the Kobayashi volume measure of the singular directed variety $(X,V)$, and give another intrinsic explanation for the bigness of $\mathcal{K}_V$. Our main theorem is as follows:

\begin{thm}\label{main}
	Let $(X,V)$ be a compact complex directed variety ($V$ is singular) with $\text{rank}(V)=r$ and $\text{dim}(X)=n$. If $V$ is of general type (see Definition \ref{general} below), with the base locus  $\text{Bs}(V)\subsetneq X$ (see also Definition \ref{general}), then Kobayashi volume measure of $(X,V)$ is non-degenerate outside $\text{Bs}(V)$. In particular $(X,V)$ is generic volume measure  hyperbolic.
\end{thm}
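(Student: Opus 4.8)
The plan is to run an Ahlfors--Schwarz argument adapted to singular directed varieties: use the bigness of $\mathcal{K}_V$ to build a (singular) volume form ``along $V$'' with positive curvature, and then compare, via the maximum principle, the pull-back of this volume form under an arbitrary disc $f\colon\mathbb{B}_r\to X$ tangent to $V$ with the Bergman volume form of $\mathbb{B}_r$, so as to bound $\|f_{*}(\tau_0)\|$ from above at each point outside $\text{Bs}(V)$.

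\emph{Reduction to a smooth directed structure.} By Demailly's construction underlying Definition \ref{general}, there is a modification $\mu\colon\widehat X\to X$, an isomorphism over $X\smallsetminus\text{Sing}(V)$, for which the saturated transform $\widehat V\subset T_{\widehat X}$ is a genuine subbundle; the hypotheses then say precisely that the line bundle $\mathcal{K}_{\widehat V}=\det\widehat V^{*}$ is big and that, writing $\widehat\Sigma\subset\widehat X$ for the proper analytic subset which is the union of $\text{Exc}(\mu)$ and the polar set of the positively curved singular metric $h$ introduced below, one has $\mu(\widehat\Sigma)\subseteq\text{Bs}(V)$ (this is built into the definition of $\text{Bs}(V)$). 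Any disc $f\colon\mathbb{B}_r\to X$ tangent to $V$ with $f(0)\notin\text{Bs}(V)$ lifts to $\widehat f:=\mu^{-1}\circ f$, a holomorphic map $\mathbb{B}_r\smallsetminus I\to\widehat X$ tangent to $\widehat V$ for some analytic $I$ of codimension $\ge 2$ with $0\notin I$, and $\widehat f$ together with the pull-back quantities entering the argument below extend across $I$ by Hartogs-type theorems. Since $\mu$ is biholomorphic near $\widehat f(0)$, it then suffices to produce an upper bound for $\|\widehat f_{*}(\tau_0)\|$ depending only on $\widehat f(0)\in\widehat X\smallsetminus\widehat\Sigma$ and not on $f$.

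\emph{The metric and the Schwarz estimate.} Bigness of $\mathcal{K}_{\widehat V}$ gives a singular Hermitian metric $h$ on it with $\sqrt{-1}\,\Theta_h\ge\varepsilon\,\omega$ for a Hermitian form $\omega>0$ and some $\varepsilon>0$, with analytic singularities along an analytic set inside $\widehat\Sigma$ (multiplying $h$ by a suitable quasi-psh weight we may also assume its polar set contains $\text{Exc}(\mu)$, which only increases the curvature). Dually $\Psi:=h^{-1}$ is a continuous pseudo-volume form along $\widehat V$, i.e. a continuous metric on $\det\widehat V$, bounded above, vanishing along $\widehat\Sigma$, with $\sqrt{-1}\,\partial\overline\partial\log\Psi=\sqrt{-1}\,\Theta_h\ge\varepsilon\omega$ off $\widehat\Sigma$; and since $\widehat V$ is a subbundle, $\omega^{r}|_{\wedge^{r}\widehat V}$ is a smooth positive metric on $\det\widehat V$, whence $\Psi\le C\,\omega^{r}|_{\wedge^{r}\widehat V}$ globally for a constant $C$. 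Now take $\widehat f\colon\mathbb{B}_r\to\widehat X$ tangent to $\widehat V$ with $\widehat f(0)=\widehat x\notin\widehat\Sigma$ and $\widehat f_{*}(\tau_0)\ne0$; writing $\widehat f_{*}\tau=J\,v_1\wedge\cdots\wedge v_r$ in a local frame of $\widehat V$, the pull-back volume form has density $\widehat f^{*}\Psi=|J|^{2}(\lambda\circ\widehat f)^{-1}$ (with $\lambda$ the local weight of $h$), and one computes $-\mathrm{Ric}(\widehat f^{*}\Psi)=[\mathrm{div}\,J]+\widehat f^{*}(\sqrt{-1}\,\Theta_h)\ge\varepsilon\,\widehat f^{*}\omega\ge0$. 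For $\delta<1$ set $w:=\widehat f^{*}\Psi/\Phi_{\delta}$ on $\mathbb{B}_{r,\delta}$, where $\Phi_{\delta}$ is the complete Bergman volume form of $\mathbb{B}_{r,\delta}$, so $-\mathrm{Ric}(\Phi_{\delta})=\omega_{\delta}$ and $\omega_{\delta}^{r}=c_r\Phi_{\delta}$ for a dimensional constant $c_r>0$. Then $w\ge0$ is upper semicontinuous and bounded on $\overline{\mathbb{B}_{r,\delta}}$, tends to $0$ at the boundary, and is positive at $0$, so it attains a positive maximum at an interior point $t_0$, which avoids $\widehat f^{-1}(\widehat\Sigma)\cup\{J=0\}$; there $\sqrt{-1}\,\partial\overline\partial\log w\le0$, i.e. $\varepsilon\,\widehat f^{*}\omega(t_0)\le\omega_{\delta}(t_0)$, hence by monotonicity of determinants and $\Psi\le C\,\omega^{r}|_{\wedge^{r}\widehat V}$ one gets $\widehat f^{*}\Psi(t_0)\le C\,(\widehat f^{*}\omega)^{r}(t_0)\le Cc_r\varepsilon^{-r}\,\Phi_{\delta}(t_0)$, so $w(t_0)\le Cc_r\varepsilon^{-r}=:C_0$, a constant independent of $\widehat f$ and $\delta$. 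Therefore $\widehat f^{*}\Psi\le C_0\Phi_{\delta}$ on $\mathbb{B}_{r,\delta}$; letting $\delta\to1$ and evaluating at the origin, $\|\widehat f_{*}(\tau_0)\|^{2}_{h^{-1}_{\widehat x}}=\widehat f^{*}\Psi(0)\le C_0\cdot\mathrm{const}$. As $h^{-1}_{\widehat x}$ is a genuine norm on the line $\wedge^{r}\widehat V_{\widehat x}$, this gives a positive lower bound for $\mathbf{e}^{r}_{\widehat X,\widehat V}$, uniform on compact subsets of $\widehat X\smallsetminus\widehat\Sigma$; transporting back through $\mu$ as above, $\mathbf{e}^{r}_{X,V}$ is non-degenerate outside $\text{Bs}(V)$, and since $\text{Bs}(V)\subsetneq X$ this also yields generic volume measure hyperbolicity.

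\emph{The main obstacle.} I expect the real work to be the bookkeeping at the two exceptional loci. On the $\widehat X$-side one must verify that $h$ can be chosen with polar set inside $\widehat\Sigma$ and that the global comparison $\Psi\le C\,\omega^{r}|_{\wedge^{r}\widehat V}$ holds --- this is exactly why one cannot argue on $X$ directly, where $\omega^{r}|_{\wedge^{r}V}$ degenerates along $\text{Sing}(V)$ at a rate that need not be dominated by any positively curved $\Psi$, so that passing to a model with $\widehat V$ a subbundle is essential. On the $X$-side one must justify carefully that $\widehat f$ and, above all, $\widehat f^{*}\Psi$ and the curvature current extend across the codimension-$\ge2$ indeterminacy locus $I$ (so that the maximum-principle step at $t_0$ is legitimate), and that $C_0$ is genuinely independent of the competing disc $f$. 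Once these points --- and the standard mild singularities of $\Psi$ along $\text{Bs}(V)$ --- are in place, the Ahlfors--Schwarz computation itself is routine.
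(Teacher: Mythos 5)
Your Ahlfors--Schwarz computation in the second step is sound and is essentially the same engine the paper uses (the paper quotes the Ahlfors--Schwarz lemma as a black box rather than re-running the maximum principle, but that is cosmetic). The problem is the first step, on which everything else rests: the claimed ``reduction to a smooth directed structure'' does not exist. There is no modification $\mu:\widehat X\to X$ under which a singular $V\subset T_X$ becomes a genuine subbundle $\widehat V\subset T_{\widehat X}$ in a way compatible with lifting discs tangent to $V$. Resolving the indeterminacy of the Gauss map $X\dashrightarrow \mathrm{Gr}(r,T_X)$ only produces a subbundle of $\mu^{*}T_X$, not of $T_{\widehat X}$; the induced directed structure on $\widehat X$ (the saturation of $(d\mu)^{-1}$ of that subbundle) in general acquires new singularities along the exceptional divisor. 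Already for a foliation by curves on $\mathbb{P}^2$ no birational model has $\mathrm{Sing}(\widehat V)=\emptyset$ (Chern class obstructions; Seidenberg only reduces, never removes, singularities). Since you yourself note that the global comparison $\Psi\le C\,\omega^{r}|_{\wedge^{r}\widehat V}$ --- which your maximum-principle step needs --- is exactly what fails on $X$ along $\mathrm{Sing}(V)$, and the smooth model you invoke to restore it is unavailable, the argument does not close. A secondary, related gap: even granting such a model, you would still need to show that the hypothesis ``$V$ of general type on $X$'' implies bigness of $\det\widehat V^{*}$ on $\widehat X$ and that $\mu(\widehat\Sigma)\subseteq\mathrm{Bs}(V)$; neither is ``built into the definition.''

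The paper's proof stays on $X$ and replaces your missing comparison by a hypothesis: a local generator $v_\alpha$ of $\wedge^{r}V^{**}$ lets one write $f_{*}(\tau_0)=a^{(\alpha)}v_\alpha|_f$ with $a^{(\alpha)}$ meromorphic (poles only over $\mathrm{Sing}(V)$), and the Kähler metric $\omega$ induces a reference singular metric $h_s$ on $K_V=\wedge^{r}V^{***}$ with weight $\log|v_\alpha|^2$. Definition \ref{general} then \emph{requires} the positively curved metric $h$ to be \emph{more singular} than $h_s$, i.e.\ $e^{\chi}h=h_s$ with $\chi$ quasi-psh and bounded above. This is precisely the device that makes $\delta(t)=|f_{*}(\tau_0)|^2_{h^{-1}}=e^{\chi\circ f}|f_{*}(\tau_0)|^2_{\omega}$ globally bounded and keeps the conformal factor $\lambda$ relating $f^{*}\omega$ to the normalized metric $\gamma$ under control, so that the Ricci inequality $dd^c\log\det\gamma\ge A\gamma$ holds with $A$ independent of $f$ despite the degeneration of $\omega^{r}|_{\wedge^{r}V}$ along $\mathrm{Sing}(V)$. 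If you want to repair your write-up, you should drop the resolution step and instead impose (or derive from bigness of $\mathcal K_V$, as in Proposition \ref{equiv}) this ``more singular than $h_s$'' condition directly on $X$.
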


\begin{rem}
	In the absolute case, Theorem \ref{main} is proved in \cite{Gri71} and \cite{KO71}; for the smooth directed variety it is proved in \cite{Dem12}.
\end{rem}

\section{Proof of main theorem}

\begin{proof}
	Since the singular set $\text{Sing}(V)$ of $V$  is an analytic set of codimension $\geq 2$, the top exterior power $\wedge ^r V$ of $V$ is a coherent sheaf of rank 1, and is included in its bidual $\wedge ^r V^{**}$, which is an invertible sheaf (of course, a line bundle). We will give an explicit construction of the multiplicative cocycle which represent the line bundle
	$\wedge ^r V^{**}$.
	
	Since $V\subset T_X$ is a coherent sheaf, we can take an open covering $\{U_\alpha\}$ satisfying that on each $U_\alpha$ one can find sections $e^{(\alpha)}_1,\ldots,e^{(\alpha)}_{k_\alpha}\in \Gamma(U_\alpha,T_X|_{U_\alpha})$ which generate the coherent sheaf $V$ on $U_\alpha$. Thus the sections  $e^{(\alpha)}_{i_1}\wedge \cdots \wedge e^{(\alpha)}_{i_r}\in \Gamma(U_\alpha,\wedge^rT_X|_{U_\alpha})$ with $(i_1,\ldots,i_r)$ varying among all $r$-tuples of $(1,\ldots,k_\alpha)$ generate the coherent sheaf $\wedge ^r V|_{U_\alpha}$, which is a subsheaf of $\wedge^rT_X|_{U_\alpha}$.  Let $v^{(\alpha)}_I:=e^{(\alpha)}_{i_1}\wedge \cdots \wedge e^{(\alpha)}_{i_r}$, then by $\text{Cod}(\text{Sing}(V))\geq2$ we know that the common zero of all $v^{(\alpha)}_I$ is contained in $\text{Sing}(V)$, and thus all $v^{(\alpha)}_I$ are proportional outside $\text{Sing}(V)$. Therefore there exists a unique $v_\alpha\in\Gamma(U_\alpha,\wedge^rT_X|_{U_\alpha})$,  and holomorphic functions $\{\lambda_I\}$ which do not have common factors,  such that  $v^{(\alpha)}_I=\lambda_I v_\alpha$ for all $I$. By this construction we can show that on $U_\alpha\cap U_\beta$, $v_\alpha$ and $v_\beta$
	coincide up to multiplication by a nowhere vanishing holomorphic function, i.e.
	$$
	v_\alpha=g_{\alpha\beta}v_\beta
	$$
	on $U_\alpha\cap U_\beta\neq \emptyset$, where $g_{\alpha\beta}\in \oc^*_X(U_\alpha\cap U_\beta)$. This multiplicative cocycle $\{g_{\alpha\beta}\}$ corresponds to the line bundle $\wedge^r V^{**}$. Then fix a K\"ahler metric $\omega$ on $X$, it will induce a metric $H_r$ on $\wedge^rT_X$ and thus also induce a singular hermitian metric $h_s$ of $\wedge^r V^{***}$ whose local weight $\varphi_\alpha$ is equal to $\log |v_\alpha|^2_{H_r}$. It is easy to show that $h_s$ has \emph{analytic singularities}, and the set of singularities $\text{Sing}(h_s)= \text{Sing}(V)$. Indeed, we have $\text{Sing}(h_s)=\cup_{\alpha}\{p\in U_\alpha| v_\alpha(p)=0 \}$. Now we make the following definition.
	
	\begin{dfn}\label{general}
		With the notions above, $V$ is called to be \emph{of general type} if there exists a singular metric $h$ on $\wedge ^r V^{***}$ with analytic singularities satisfying the following two conditions:
		\begin{enumerate}[(1)]\label{property}
			\item The curvature current $\Theta_h\geq \epsilon \omega$, i.e., it is a K\"ahler current.
			\item $h$ is more singular than $h_s$, i.e., there exists a globally defined quasi-psh function $\chi$ which is bounded from above such that 
			$$
			e^{\chi} \cdot h=h_s.
			$$
		\end{enumerate}
	Since $h$ and $h_s$ has both analytic singularities, $\chi$ also has analytic singularities, and thus $e^{\chi}$ is a continuous function. Moreover, $e^{\chi(p)}> 0$ if $p\notin \text{Sing}(h)$. We define \emph{the base locus of $V$} to be
	$$
	\text{Bs}(V):= \cap_h\text{Sing}(h),
	$$
	where $h$ varies among all singular metric on $\wedge^r V^{***}$
 satisfying the Properties (1) and (2) above.		
	\end{dfn}
	Now fix a point $p\notin \text{Bs}(V)$,  then by Definition \ref{general} we can find a singular metric $h$ with analytic singularities satisfying the Property (1) and (2) above, and $p\notin \text{Sing}(h)$.

	Let $f$ be any holomorphic map from the unit ball $\mathbb{B}_r\subset \cb^r$ to $(X,V)$ such that $f(0)=p$, then locally we have
	$$
	f_*(\frac{\d}{\d t_1}\wedge\cdots\wedge \frac{\d}{\d t_r})=a^{(\alpha)}(t)\cdot v_\alpha|_{f},
	$$
	where $a^{(\alpha)}(t)$ is meromorphic functions, with poles contained in $f^{-1}(\text{Sing}(V))$, and satisfies
	$$
	|f_*(\frac{\d}{\d t_1}\wedge\cdots\wedge \frac{\d}{\d t_r})|^2_{\omega}=|a^{(\alpha)}(t)|^2\cdot |v_\alpha|^2_{H_r}\leq C.
	$$
	Since $f_*(\frac{\d}{\d t_1}\wedge\cdots\wedge \frac{\d}{\d t_r})$ can be seen as a (meromorphic!) section of $f^*\wedge^r V^{**}$, then we define
\begin{equation}\nonumber
\delta(t):=|f_*(\frac{\d}{\d t_1}\wedge\cdots\wedge \frac{\d}{\d t_r})|^2_{h^{-1}}=|a^{(\alpha)}(t)|^2\cdot e^{\phi_\alpha},
\end{equation}
	where $\phi_\alpha$ is the local weight of $h$. By Property (2) above, we have a globally defined quasi-psh function $\chi$ on $X$ which is bounded from above such that 
\begin{equation}\label{1}
	\delta(t)=e^{\chi}\cdot |f_*(\frac{\d}{\d t_1}\wedge\cdots\wedge \frac{\d}{\d t_r})|^2_{\omega}\leq C_1.
\end{equation}
	
	Now we define a semi-positive metric $\widetilde{\gamma}$ on $\mathbb{B}_r$ by putting $\widetilde{\gamma}:=f^{*}\omega$, then we have
	\begin{equation}\label{2}
	\frac{|f_*(\frac{\d}{\d t_1}\wedge\cdots\wedge \frac{\d}{\d t_r})|_{\omega}}{\det \widetilde{\gamma}}\leq C_0(f(t)),
	\end{equation}
	where $C_0(z)$ is a bounded function on $X$ which does not depend on $f$, and we take $C_2$ to be its upper bound. One can find a conformal $\lambda(t)$ to define $\gamma=\lambda \widetilde{\gamma}$ satisfying
	$$
	\det \gamma=\delta(t)^{\frac{1}{2}}.
	$$
Combined (\ref{1}) and (\ref{2}) togother we obtain
$$
\lambda\leq C_2^{\frac{1}{r}}\cdot e^{\frac{\chi}{2r}}.
$$
Since $\Theta_h\geq \epsilon \omega$, by (\ref{1}) we have
	$$
	\dm \dm^c\log \det\gamma \geq \frac{\epsilon}{2}f^{*}\omega=\frac{\epsilon}{2\lambda}\gamma\geq \frac{\epsilon}{2C_2^{\frac{1}{r}}} e^{-\frac{\chi\circ f}{2r}}\gamma.
	$$
By Property (2) in Definition \ref{general} of $h$, there exists a constant $C_3>0$ such that
$$
e^{-\frac{\chi}{2r}}\geq C_3.
$$
Let $A:=\frac{\epsilon C_3}{2C_2^{\frac{1}{r}}}$, and we know that it is a constant which does not depend on $f$. Then by Ahlfors-Schwarz Lemma (see Lemma \ref{Ahlfors} below) we have
	$$
	\delta(0)\leq(\frac{r+1}{A})^{2r}.
	$$
	Since $p\notin \text{Sing}(h)$, then we have $e^{\chi(p)}>0$,  and thus
	$$
	|f_*(\frac{\d}{\d t_1}\wedge\cdots\wedge \frac{\d}{\d t_r})|^2_{\omega}(0)\leq e^{-\chi(p)}\delta(0)=e^{-\chi(p)}\cdot(\frac{r+1}{A})^{2r}.
	$$
	By the arbitrariness of $f$, and Definition \ref{Kobayashi volume}, we conclude that $(X,V)$ is generic volume measure hyperbolic and $\mathbf{e}^r_{X,V}$ is non-degenerate outside $\text{Bs}(V)$.
	\end{proof}
	
	\begin{lem}[Ahlfors-Schwarz]\label{Ahlfors}
	Let $\gamma=\sqrt{-1}\sum \gamma_{jk}(t)\dm t_j\wedge \dm t_k$ be an almost everywhere positive hermitian form on the ball  $B(0,R)\subset \cb^r$ of radius $R$, such that
	$$
	-\text{Ricci}(\gamma):=\sqrt{-1}\d \db \log \det \gamma \geq A \gamma 
	$$
	in the sense of currents for some constant $A>0$. Then
	$$
	\det(\gamma)(t)\leq (\frac{r+1}{AR^2})^r \frac{1}{(1-\frac{|t|^2}{R^2})^{r+1}}.
	$$
	\end{lem}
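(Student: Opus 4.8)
The plan is to reduce the statement to the classical one-dimensional Schwarz–Pick/Ahlfors computation by a standard barrier argument. First I would fix $R$ and, by rescaling $t\mapsto Rt$, assume $R=1$, so that $\gamma$ is defined on the unit ball $\mathbb{B}_r$. Introduce the Poincaré-type model metric on $\mathbb{B}_r$, namely
\[
\sigma \;=\; \frac{r+1}{A}\,\frac{\sqrt{-1}\,\partial\db\bigl(-\log(1-|t|^2)\bigr)}{},
\]
or more precisely the metric whose determinant is $\det\sigma = \bigl(\tfrac{r+1}{A}\bigr)^r (1-|t|^2)^{-(r+1)}$; this is exactly the metric for which the Ricci inequality $-\mathrm{Ricci}(\sigma)=A\sigma$ becomes an equality, since the Kähler–Einstein metric on the ball has constant negative Ricci curvature and a direct computation gives $\sqrt{-1}\partial\db\log\det\sigma = (r+1)\sqrt{-1}\partial\db\bigl(-\log(1-|t|^2)\bigr)$, which equals $A\sigma$ after the normalization. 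The goal is then the pointwise inequality $\det\gamma \le \det\sigma$ on $\mathbb{B}_r$, which is precisely the claimed bound.

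The key step is the maximum principle applied to the ratio $u := \det\gamma/\det\sigma$. Since $\det\sigma \to \infty$ as $|t|\to 1$ while $\det\gamma$ stays bounded on compact subsets (it is locally the determinant of a hermitian form with locally bounded, a.e.\ positive coefficients), $u$ extends by $0$ to the boundary and attains its maximum at some interior point $t_0$ (if $u\equiv 0$ there is nothing to prove; otherwise the sup is positive and attained in the interior). At a smooth maximum point of $\log u$, one has $\sqrt{-1}\partial\db\log u \le 0$ in the sense of currents, hence
\[
\sqrt{-1}\partial\db\log\det\gamma \;\le\; \sqrt{-1}\partial\db\log\det\sigma \;=\; A\,\sigma
\]
at $t_0$; combining with the hypothesis $\sqrt{-1}\partial\db\log\det\gamma \ge A\gamma$ gives $A\gamma(t_0) \le A\sigma(t_0)$, so $\gamma(t_0)\le\sigma(t_0)$ as hermitian forms, whence $\det\gamma(t_0)\le\det\sigma(t_0)$, i.e.\ $u(t_0)\le 1$. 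Since $t_0$ is a maximum, $u\le 1$ everywhere, which is the assertion.

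The main obstacle is the regularity issue: $\gamma$ is only assumed almost everywhere positive with measurable (or at best continuous) coefficients, so $\log\det\gamma$ need not be smooth, $u$ need not be smooth, and the curvature hypothesis holds only in the sense of currents. To handle this rigorously I would either (i) first prove the smooth case as above and then approximate $\gamma$ by smooth metrics $\gamma_\varepsilon := \gamma + \varepsilon\,\omega_{\mathrm{eucl}}$ shrunk to a slightly smaller ball $B(0,1-\delta)$, applying the smooth result there and letting $\varepsilon,\delta\to 0$; or (ii) invoke the generalized maximum principle for upper semicontinuous functions satisfying the differential inequality in the viscosity/current sense — the function $\log u$ is then a subsolution of an elliptic equation and the Hopf-type maximum principle still yields $\log u\le 0$. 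Option (i) is the cleaner route and the one I would write up; the only care needed is that the constant $\tfrac{r+1}{A}$ and the blow-up rate $(1-|t|^2)^{-(r+1)}$ of the model metric are robust under the $\varepsilon,\delta\to 0$ limit, which they are since the bound is uniform on each fixed compact subset. Finally, undoing the rescaling $t\mapsto Rt$ restores the factors $R^2$ and $|t|^2/R^2$ in the stated form.
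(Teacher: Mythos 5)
The paper does not actually prove this lemma; it is stated and cited as a classical result (it is the higher-dimensional Ahlfors--Schwarz lemma as formulated in Demailly's lecture notes \cite{Dem12}). So there is no ``paper's proof'' to compare against, and I will assess your argument on its own.

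Your overall strategy --- compare $\gamma$ with the scaled Poincar\'e metric $\sigma$ on the ball and run a maximum-principle argument on the ratio $\det\gamma/\det\sigma$ --- is exactly the standard one, and the smooth case you sketch is correct: the normalization $\det\sigma=\bigl(\tfrac{r+1}{A}\bigr)^r(1-|t|^2)^{-(r+1)}$ making $-\mathrm{Ricci}(\sigma)=A\sigma$ is right, and at an interior maximum of $\log(\det\gamma/\det\sigma)$ the combination of $\partial\db\log u\le 0$ with the two Ricci inequalities forces $\gamma\le\sigma$ and hence $\det\gamma\le\det\sigma$ at that point.

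There is, however, a genuine gap in your regularization step. In option (i), $\gamma_\varepsilon:=\gamma+\varepsilon\omega_{\mathrm{eucl}}$ is not smooth when $\gamma$ is not already smooth (adding a smooth form does not smooth out a non-smooth one), and even when $\gamma$ \emph{is} smooth, $\gamma_\varepsilon$ has no reason to satisfy the curvature hypothesis $-\mathrm{Ricci}(\gamma_\varepsilon)\ge A\gamma_\varepsilon$: the map $\gamma\mapsto\sqrt{-1}\partial\db\log\det\gamma$ is nonlinear, and a first-order expansion shows that one would need $\partial\db\,\mathrm{tr}(\gamma^{-1})\ge A\,\mathrm{Id}$, which is not implied by anything. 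Mollification by convolution runs into the same obstruction because the concavity of $\log\det$ works against you under averaging. A secondary point: on the full ball one cannot assert that $\det\gamma/\det\sigma$ attains its supremum in the interior, since $\det\gamma$ need not be bounded near $\partial\mathbb{B}_r$; the exhaustion by $B(0,R')$, $R'\nearrow R$, with the Poincar\'e metric adapted to each $B(0,R')$, must therefore come first and is not merely a regularization device.

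The clean way to make the argument rigorous without regularizing $\gamma$ (essentially your option (ii), made concrete) is to set $w:=\log\det\gamma-\log\det\sigma_{R'}$ on $B(0,R')$. The hypothesis implies $\log\det\gamma$ is psh, so $w$ is upper semicontinuous and tends to $-\infty$ at $\partial B(0,R')$. By concavity of $\log\det$, $w\le\mathrm{tr}_{\sigma_{R'}}\gamma-r$ pointwise, while taking the $\sigma_{R'}$-trace of the current inequality $\partial\db w\ge A(\gamma-\sigma_{R'})$ gives $\Delta_{\sigma_{R'}}w\ge A(\mathrm{tr}_{\sigma_{R'}}\gamma-r)\ge Aw$ in the sense of distributions. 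The weak maximum principle for distributional subsolutions of $\Delta u=Au$ with $A>0$ then yields $w\le 0$, and letting $R'\to R$ finishes the proof. This replaces the delicate pointwise second-order analysis at a maximum of a possibly non-smooth function by a scalar differential inequality to which the maximum principle applies directly.
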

	
	\begin{rem}
		If $V$ is regular, then $V$ is of general type if and only if $\wedge^r V$ is a big line bundle. In this situation, the base locus $\text{Bs}(V)=\mathbf{B}_+(\wedge^r V)$, where $\mathbf{B}_+(\wedge^r V)$ is the \emph{augmented base locus} for the big line bundle $\wedge^r V$ (ref. \cite{Laz04}).
	\end{rem}

	With the notions above, we define the coherent ideal sheaf $\ic(V)$ to be germ of holomorphic functions which is locally bounded with respect to $h_s$, i.e., $\ic(V)$ is the integral closure of the ideal generated by the coefficients of $v_\alpha$ in some local trivialization in $\wedge^r T_X$. Let $\wedge^r V^{***}$ be denoted by $K_V$, and $\mathcal{K}_V:=K_V\otimes \ic(V)$, then $\mathcal{K}_V$ is the \emph{canonical sheaf of $(X,V)$} defined in \cite{Dem12}. It is easy to show that the zero scheme of $\ic(V)$ is equal to $\text{Sing}(h_s)=\text{Sing}(V)$.  $\mathcal{K}_V$ is called to be a \emph{big sheaf} iff for some log resolution $\mu:\widetilde{X}\rightarrow X$ of $\ic(V)$ with $\mu^*\ic(V)=\oc_{\widetilde{X}}(-D)$, $\mu^*K_V-D$ is big in the usual sense.  Now we have the following statement:
	
	\begin{proposition}\label{equiv}
	$V$ is of general type if and only if $\mathcal{K}_V$ is big.	Moreover, we have
	$$
	\text{Bs}(V)\subset \mu(\mathbf{B}_+(\mu^*K_V-D))\cup \text{Sing}(h_s)= \mu(\mathbf{B}_+(\mu^*K_V-D))\cup \text{Sing}(V).
	$$
	\end{proposition}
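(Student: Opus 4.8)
The plan is to prove the two directions of the equivalence by translating between the two languages — "singular metrics with a Kähler current and a comparison with $h_s$" on $X$ versus "big divisors" on a log resolution $\widetilde X$ — and then to deduce the inclusion of base loci by a more careful bookkeeping of the metrics involved.

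\textbf{From "of general type" to "$\mathcal{K}_V$ big".} Suppose $h$ is a singular metric on $K_V = \wedge^r V^{***}$ with analytic singularities satisfying Properties (1) and (2). Pull everything back by the log resolution $\mu : \widetilde X \to X$, where $\mu^{*}\ic(V) = \oc_{\widetilde X}(-D)$. The metric $\mu^{*}h$ is a singular metric on $\mu^{*}K_V$ whose curvature current is still a Kähler current on $\widetilde X$ (up to subtracting $\epsilon$ times an exceptional term, one uses that $\mu$ is biholomorphic off the exceptional locus and a standard regularization argument to keep strict positivity — or simply uses that $\mu^{*}\omega + \epsilon'[E]$ can be made Kähler for small $\epsilon'$ when $\widetilde X$ is chosen appropriately). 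The key point is that the comparison $e^{\chi}\cdot h = h_s$ in Property (2), combined with the fact that the ideal generated by the coefficients of $v_\alpha$ is, after pullback, precisely $\oc_{\widetilde X}(-D)$ up to integral closure, shows that $\mu^{*}h$ is a metric on $\mu^{*}K_V$ that picks up exactly the divisorial singularity $D$: concretely, $\mu^{*}h_s$ corresponds to the divisor $D$, and $\mu^{*}h$ is more singular by the bounded-from-above quasi-psh function $\chi\circ\mu$, hence $\mu^{*}h$ descends to a singular metric with Kähler curvature current on $\mu^{*}K_V - D$. By Demailly's characterization of big line bundles (a line bundle carrying a singular metric with strictly positive curvature current is big), $\mu^{*}K_V - D$ is big, i.e. $\mathcal{K}_V$ is big.

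\textbf{From "$\mathcal{K}_V$ big" to "of general type".} Conversely, if $\mu^{*}K_V - D$ is big on $\widetilde X$, equip it with a singular metric $\widetilde h$ whose curvature is a Kähler current (again Demailly's theorem). Then $\widetilde h \otimes (\text{metric on }\oc_{\widetilde X}(D)\text{ from }\mu^{*}h_s)$ is a singular metric on $\mu^{*}K_V$ with Kähler curvature current; the remaining task is to push it down to $X$. Since $\mu$ is proper and birational and $K_V$ is defined on $X$, the direct image (equivalently, the fact that a quasi-psh weight on $\widetilde X$ that is locally bounded below near the generic point of every $\mu$-exceptional divisor descends) yields a singular metric $h$ on $K_V$; its curvature current is $\geq \epsilon\,\omega$ because push-forward of a Kähler current under a modification is again a Kähler current. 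Finally one checks Property (2): the constructed $h$ differs from $h_s$ by a globally defined quasi-psh function bounded from above, because the divisorial part of $\mu^{*}h$ above was exactly $D = \mu^{*}(\text{singularity of }h_s)$, so after descent the extra singularity of $h$ over $h_s$ is the bounded-above function coming from the big metric $\widetilde h$. Hence $V$ is of general type.

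\textbf{The base locus inclusion.} For the last assertion, fix $p \notin \mu(\mathbf{B}_+(\mu^{*}K_V - D)) \cup \text{Sing}(h_s)$; I want to produce an admissible $h$ in Definition \ref{general} with $p \notin \text{Sing}(h)$, which then forces $p \notin \text{Bs}(V)$. Since $p$ avoids the image of the augmented base locus, there is a Kähler-current metric $\widetilde h$ on $\mu^{*}K_V - D$ whose singular locus misses $\mu^{-1}(p)$ (this is exactly what $\mathbf{B}_+$ controls: off $\mathbf{B}_+$ one has a strictly positive smooth local potential). Running the descent construction of the previous paragraph with this particular $\widetilde h$ produces an $h$ on $K_V$ whose singularities are contained in $\mu(\mathbf{B}_+(\mu^{*}K_V - D)) \cup \text{Sing}(h_s)$, hence $p \notin \text{Sing}(h)$. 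Intersecting over all such $p$ gives the stated inclusion; the equality $\text{Sing}(h_s) = \text{Sing}(V)$ has already been observed in the body of the proof of Theorem \ref{main}.

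\textbf{Main obstacle.} The delicate point is the descent step: pushing a singular metric with a Kähler curvature current from the resolution $\widetilde X$ down to $X$ while simultaneously (i) keeping strict positivity $\Theta_h \geq \epsilon\omega$ — one must be careful that the potential does not acquire a $-\infty$ along a $\mu$-exceptional divisor that gets contracted, which is handled by the fact that $\mu^{*}K_V$ genuinely descends and the exceptional discrepancies are absorbed into $D$ — and (ii) matching the singularity exactly against $h_s$ so that Property (2)'s comparison function $\chi$ is globally quasi-psh and bounded above rather than merely locally so. Getting the divisor $D$ to play the role of "the singularity of $h_s$, resolved" cleanly on the nose (using the integral closure in the definition of $\ic(V)$, which is what makes $\mu^{*}\ic(V)$ invertible) is where the technical care is concentrated.
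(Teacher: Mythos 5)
Your overall strategy is genuinely different from the paper's. The paper proves the direction ``$\mathcal{K}_V$ big $\Rightarrow$ $V$ of general type'' (and the base-locus inclusion) purely with sections: it fixes a very ample $A$, uses that $\mathbf{B}(m\mu^*K_V-mD-\mu^*A)\subset\mathbf{B}_+(\mu^*K_V-D)$ for $m\gg0$, transports a basis $s_i$ of $H^0(\widetilde X,m\mu^*K_V-mD-\mu^*A)$ through the isomorphism with $H^0(X,(mK_V-A)\otimes\ic(V)^m)$ to sections $e_i$ on $X$, and sets $h:=\bigl(h_A\cdot(\textstyle\sum_i|e_i|^2)^{-1}\bigr)^{1/m}$. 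This makes every required property immediate: the metric is globally defined on $X$ (no descent needed), has analytic singularities by construction, is more singular than $h_s$ precisely because the $e_i$ lie in $\ic(V)^m$, and its singular set is visibly contained in $\mu(\mathbf{B}_+(\mu^*K_V-D))\cup\mathrm{Sing}(h_s)$. Your route replaces all of this by pulling back and pushing forward singular metrics and K\"ahler currents, and you additionally attempt the converse direction, which the paper essentially asserts without proof.

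There are, however, two concrete gaps. First, in the direction ``general type $\Rightarrow$ big,'' the crux is that the induced metric on $\mu^*K_V-D$ has curvature $\mu^*\Theta_h-[D]$, and you must show this is still bounded below by $\epsilon\mu^*\omega$ (after which bigness follows since $\{\mu^*\omega\}$ is nef with positive top self-intersection, hence big). This requires a Siu-decomposition argument: Property (2) forces the generic Lelong numbers of $\mu^*\Theta_h$ along the components of $D$ to be at least the corresponding multiplicities, whence $\mathbf{1}_{|D|}\,\mu^*\Theta_h\geq[D]$ and therefore $\mu^*\Theta_h-[D]\geq\mathbf{1}_{\widetilde X\setminus|D|}\,\mu^*\Theta_h\geq\epsilon\mu^*\omega$. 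Your parenthetical justification (``$\mu^*\omega+\epsilon'[E]$ can be made K\"ahler'') is not correct as stated and does not address this point; without the Lelong-number step the implication is actually false in spirit --- the paper's Example 2.1 shows that bigness of $K_V$ alone never suffices, so the comparison with $h_s$ must enter quantitatively exactly here. Second, the descent step in the converse direction, which you yourself flag as the main obstacle, is not carried out: you must verify that the descended weight is the potential of a closed current on all of $X$ (extension of quasi-psh functions across the codimension-$\geq2$ image of the exceptional locus), that strict positivity survives ($\widetilde\omega\geq c\,\mu^*\omega$ on the compact $\widetilde X$, then push forward), and that the comparison function $\chi$ is \emph{globally} quasi-psh and bounded above on $X$, not merely on $X\setminus\mu(\mathrm{Exc}(\mu))$. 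These are all fixable, but as written the proof is incomplete precisely at the points where the paper's section-theoretic construction is designed to make the verification trivial.
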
  
	
	\begin{proof}
		From Definition \ref{general}, the condition that $\mathcal{K}_V$ is a big sheaf implies that $K_V$ and $\mu^*K_V-D$ are both big line bundles. For $m\gg0$, we have the following isomorphism
\begin{equation}\label{iso}
	\mu^*:H^0(X,(mK_V-A)\otimes \ic(V)^m)\xrightarrow{\approx} H^0(\widetilde{X},m\mu^*{K_V}-\mu^*A-mD).
\end{equation}
		Fix an very ample divisor $A$, then for $m\gg0$, the base locus (in the usual sense) $\mathbf{B}(m\mu^*{K_V}-mD-\mu^*A)$ is stably contained in $\mathbf{B}_+(\mu^*K_V-D)$ (ref. \cite{Laz04}). Thus we can take a $m\gg0$ to choose a basis $s_1,\ldots,s_k\in H^0 (\widetilde{X},m\mu^*{K_V}-mD-\mu^*A)$, whose common zero is contained in $\mathbf{B}_+(\mu^*K_V-D)$. Then by (\ref{iso}) there exists $\{e_i\}_{1\leq i\leq k}\subset H^0(X,(mK_V-A)\otimes \ic(V)^m)$ such that 
		$$
		\mu^*(e_i)=s_i.
		$$
		Then we can define a singular metric $h_m$ on $mK_V-A$ by putting $|\xi|^2_{h_m}:=\frac{|\xi|^2}{\sum_{i=1}^{k}|e_i|^2}$ for $\xi\in (mK_V-A)_x$. We take a smooth metric $h_A$ on $A$ such that the curvature $\Theta_A\geq \epsilon\omega$ is a smooth K\"ahler form. Then $h:=(h_mh_A)^{\frac{1}{m}}$ defines a singular metric on $K_V$ with analytic singularities, such that its curvature current $\Theta_h\geq \frac{1}{m}\Theta_A$. From the construction we know that $h$ is more singular than $h_s$, and $\text{Sing}(h)\subset \mu(\mathbf{B}_+(\mu^*K_V-D))\cup \text{Sing}(h_s)$. 
	\end{proof}
	
	\begin{rem}
		From Proposition \ref{equiv} we can take Definition \ref{general} as another equivalent definition of the bigness of $\mathcal{K}_V$, and it is more analytic. From Theorem \ref{main} we can replace the condition that $V$ is of general type by the bigness of $\mathcal{K}_V$, and it means that the definition of canonical sheaf of singular directed varieties is very natural.
	\end{rem}
	
A direct consequence of Theorem \ref{main} is the following corollary, which was suggested in \cite{GPR13}: 
	\begin{cor}
		Let $(X,V)$ be directed varieties with $\text{rank}(V)=r$, and $f$ be a holomorphic map from $\cb^r$ to $(X,V)$ with generic maximal rank. Then if $\mathcal{K}_V$ is big, the image of $f$ is contained in $\text{Bs}(V)\subsetneq X$.
	\end{cor}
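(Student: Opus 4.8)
The plan is to leverage Theorem \ref{main} together with Proposition \ref{equiv}. Since $\mathcal{K}_V$ is big, Proposition \ref{equiv} tells us $V$ is of general type with $\text{Bs}(V)\subsetneq X$, so Theorem \ref{main} applies: the Kobayashi volume measure $\mathbf{e}^r_{X,V}$ is non-degenerate (i.e.\ strictly positive definite) at every point $p\notin \text{Bs}(V)$, with a uniform lower bound. More precisely, tracing through the proof of Theorem \ref{main}, for any holomorphic $g:\mathbb{B}_r\to(X,V)$ with $g(0)=p\notin\text{Sing}(h)$ one obtains a bound of the shape $|g_*(\tau_0)|^2_\omega(0)\le e^{-\chi(p)}\big(\tfrac{r+1}{A}\big)^{2r}$, with $A$ independent of $g$.

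Now suppose, for contradiction, that the image of $f:\cb^r\to(X,V)$ is not contained in $\text{Bs}(V)$. Since $f$ has generic maximal rank, there is a point $a\in\cb^r$ with $f(a)\notin\text{Bs}(V)$ and $df_a$ of rank $r$, so $f_*(\tau_a)\ne 0$ in $\wedge^r V_{f(a)}$. First I would translate so that $a=0$. The key step is a rescaling argument: for each $R>0$ consider $f_R:\mathbb{B}_r\to(X,V)$ defined by $f_R(t):=f(Rt)$; this is an admissible competitor in Definition \ref{Kobayashi volume} with $f_R(0)=f(0)=:p\notin\text{Bs}(V)$, and $f_{R*}(\tau_0)=R^r f_*(\tau_0)$. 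Applying the uniform bound from Theorem \ref{main} at the fixed point $p$ gives $R^{2r}\,|f_*(\tau_0)|^2_\omega(0)\le e^{-\chi(p)}\big(\tfrac{r+1}{A}\big)^{2r}$ for every $R>0$. Letting $R\to\infty$ forces $|f_*(\tau_0)|^2_\omega(0)=0$, i.e.\ $f_*(\tau_0)=0$, contradicting that $f$ has maximal rank at $0$.

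Hence every point at which $f$ has maximal rank must lie in $\text{Bs}(V)$. Since $f$ has \emph{generic} maximal rank, the locus where $df$ has rank $r$ is a dense (Zariski-open) subset of $\cb^r$, and its image is therefore dense in $\overline{f(\cb^r)}$; as $\text{Bs}(V)$ is closed (indeed analytic), we conclude $f(\cb^r)\subset\text{Bs}(V)$. Finally $\text{Bs}(V)\subsetneq X$ by Proposition \ref{equiv} since $\mathcal{K}_V$ is big.

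The main obstacle is making the rescaling step fully rigorous: one must check that $f_R$ is a genuine admissible map into $(X,V)$ (it is, since $V$ is a subsheaf and $f_{R*}(T_{\mathbb{B}_r})\subset V$ follows from $f_*(T_{\cb^r})\subset V$ by the chain rule), and that the bound in Theorem \ref{main} is genuinely \emph{uniform} in the competitor — which it is, because the constant $A$ and the factor $e^{-\chi(p)}$ depend only on $p$ and the fixed data $(h,\omega)$, not on $f_R$. A secondary point is the passage from "maximal rank on a dense open set" to containment of the whole image, which uses only that $\text{Bs}(V)$ is closed.
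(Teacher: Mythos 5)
Your proof is correct, and it is precisely the intended argument: the paper states the corollary as a direct consequence of Theorem \ref{main} without proof, and the rescaling $f_R(t)=f(Rt)$ showing that $\mathbf{e}^r_{X,V}$ degenerates along the image of a maximal-rank map from $\cb^r$ is the standard way to make that deduction. Your handling of the auxiliary points (uniformity of the constant in the competitor, closedness of $\text{Bs}(V)$, and $\text{Bs}(V)\subsetneq X$ via Proposition \ref{equiv}) is also sound.
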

	
The famous conjecture by Green-Griffiths stated that in the absolute case the converse of Theorem \ref{main} should be true. It is natural to ask whether we have similar results for any directed varieties.  A result by Marco Brunella (ref. \cite{Bru10}) gives a weak converse of Theorem \ref{main} for 1-directed variety:

\begin{thm}
Let $X$ be a compact K\"ahler manifold equipped with a singular holomorphic foliation $\fc$ by curves. Suppose that $\fc$ contains at least one leaf which is hyperbolic, then the canonical bundle $K_\fc$ is pseudoeffective.
\end{thm}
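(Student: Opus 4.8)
The plan is to construct directly on $X$ a singular Hermitian metric on $K_{\fc}$ with nonnegative curvature current; since $X$ is compact Kähler, this is precisely what is needed for $K_{\fc}$ to be pseudoeffective. The metric will be the dual of the \emph{leafwise Poincaré metric} on the tangent line bundle $T_{\fc}\subset T_X$. Fix a reference Kähler form $\omega$, and recall that $T_{\fc}$ is a genuine line bundle on all of $X$, the sheaf map $T_{\fc}\to T_X$ dropping rank only on the analytic set $\mathrm{Sing}(\fc)$ of codimension $\geq 2$. Off $\mathrm{Sing}(\fc)$ the foliation is regular; for $x\notin\mathrm{Sing}(\fc)$ let $L_x$ be the leaf through $x$, and for a local frame $v$ of $T_{\fc}$ near $x$ put $\|v(x)\|^2:=|v(x)|^2_{g^{L_x}_P}$ when $L_x$ is hyperbolic (with $g^{L_x}_P$ its complete metric of curvature $-1$) and $\|v(x)\|^2:=0$ when $L_x$ is parabolic; equivalently, $\|\cdot\|$ is the leafwise Kobayashi pseudometric. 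Dualizing, let $\varphi=\log\|v(x)\|^2\in[-\infty,+\infty)$ be the local weight of the induced metric on $K_{\fc}$ in the frame $v^\star$. The theorem then follows from three claims: (i) $\varphi$ is plurisubharmonic on $X\setminus\mathrm{Sing}(\fc)$; (ii) $\varphi$ is locally bounded above on $X\setminus\mathrm{Sing}(\fc)$, and in fact $\varphi\to-\infty$ along $\mathrm{Sing}(\fc)$; (iii) $\varphi\not\equiv-\infty$. Granting these, the boundedness above lets $\varphi$ extend across the analytic set $\mathrm{Sing}(\fc)$ by the standard removable-singularity theorem for psh functions, yielding a global quasi-psh weight which, by (iii), is not identically $-\infty$ — i.e. a singular metric on $K_{\fc}$ with curvature current $\geq 0$.

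Claim (iii) is exactly where the hypothesis is used: on a hyperbolic leaf $\|v\|>0$, so $\varphi$ is finite somewhere. For claim (ii): away from $\mathrm{Sing}(\fc)$, compactness of $X$ gives flow boxes of a size bounded below, hence through every $x\notin\mathrm{Sing}(\fc)$ a holomorphic disc $f\colon\mathbb{D}\to L_x\subset X$, $f(0)=x$, with $|f'(0)|_\omega$ bounded below uniformly; monotonicity of the Kobayashi metric gives $\|v(x)\|\leq C\,|v(x)|_\omega$ locally uniformly, so $\varphi$ is bounded above there. Near $\mathrm{Sing}(\fc)$ the leaves are hyperbolic and their Poincaré metrics blow up, but the frame $v$ vanishes at $\mathrm{Sing}(\fc)$ at a rate that exactly compensates this blow-up — as in the model $v=\lambda_1 x\,\partial_x+\lambda_2 y\,\partial_y$, where along the leaf $\{y=0\}\cong\mathbb{D}^\ast$ one has $|v|^2_{g_P}\sim|\lambda_1|^2/(\log|x|)^2\to 0$ — so $\varphi\to-\infty$ there; the general case is handled by comparing the leaf near $\mathrm{Sing}(\fc)$ with a punctured polydisc via the Schwarz lemma.

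The substantial point is claim (i), which as usual splits into a leafwise part and a transverse part. Along each leaf $L$ the Poincaré metric on $T_{\fc}|_L$ has curvature $-1$, so the dual metric on $K_{\fc}|_L$ has curvature $+1$ and $\varphi|_L$ is strictly subharmonic; this is routine. The transverse direction is the real obstacle: in a flow box $\mathbb{D}_z\times\mathbb{D}^{n-1}_w$ with plaques $\mathbb{D}_z\times\{w\}\subset L_w$ and $v=\partial/\partial z$, one has $\varphi(z,w)=2\log\rho_w(z)$ with $\rho_w$ the Poincaré density of $L_w$, and one must show $(z,w)\mapsto\log\rho_w(z)$ is plurisubharmonic. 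For an honest holomorphic family of hyperbolic Riemann surfaces this is the theorem on the plurisubharmonic variation of the hyperbolic metric (Yamaguchi, and later refinements); the complication here is that the $L_w$ are leaves, not fibers of a fibration, so they glue out of plaques with possibly wild global behaviour (non-closed leaves, holonomy). I would deal with this by exhausting each $L_w$ by finite chains of plaques $L^{(N)}_w$, establishing the transverse plurisubharmonicity of $\log(\text{Kobayashi density of }L^{(N)}_w)$ for each fixed $N$ (now a genuinely ``fibered'' statement, provable by the disc-bundle/Kobayashi-metric method), and then letting $N\to\infty$: by monotonicity of the Kobayashi metric under the inclusions $L^{(N)}_w\hookrightarrow L^{(N+1)}_w\hookrightarrow L_w$ the densities decrease to $\rho_w$, and a decreasing limit of psh functions is psh. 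Combining (i)--(iii) and extending $\varphi$ across $\mathrm{Sing}(\fc)$ finishes the proof. I expect the main difficulty to be exactly this transverse step — organizing the leaves into a legitimate enough ``family'' (controlling holonomy and the exhaustion) for the Yamaguchi-type plurisubharmonicity to apply — while the rest reduces to Ahlfors--Schwarz and Kobayashi-metric comparisons together with the classical psh extension theorem.
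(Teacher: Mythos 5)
First, a point of order: the paper does not prove this theorem; it is quoted from Brunella [Bru10], and the paper's one-sentence description of his argument (``putting on $K_{\mathcal F}$ precisely the Poincar\'e metric on the hyperbolic leaves'') is exactly the construction you propose. So your architecture --- leafwise Poincar\'e/Kobayashi metric on $T_{\mathcal F}$, dualize to a singular metric on $K_{\mathcal F}$, prove (i) plurisubharmonicity of the weight, (ii) boundedness above near $\mathrm{Sing}(\mathcal F)$ so as to extend across it, (iii) non-triviality from the existence of one hyperbolic leaf --- is the right one and is Brunella's. Claims (ii) and (iii) are handled essentially correctly (for (ii) one only needs local boundedness above, which your flow-box comparison and the Schwarz-lemma comparison with a punctured disc give; the precise rate $\varphi\to-\infty$ is not needed).

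The genuine gap is in claim (i), which is not a technicality but the entire content of the theorem, and your proposed reduction does not close it. Exhausting a leaf by finite chains of plaques $L_w^{(N)}$ does \emph{not} produce ``a genuinely fibered statement'': as $w$ varies, the abstract union of $N$ holonomy-continued plaques is an immersed, possibly self-overlapping surface, and the resulting tube over the transversal can fail to be Hausdorff, let alone Stein; the Yamaguchi--Nishino--Kizuka theorems on plurisubharmonic variation of the Poincar\'e metric require precisely such hypotheses on the total space of the family. Brunella's proof spends most of its effort here: he replaces your chains by the \emph{covering tubes} (fiberwise universal/holonomy covers over a transversal), and proves that these tubes are Hausdorff and enjoy the pseudoconvexity needed for the psh-variation theorem by invoking an Ivashkovich-type extension theorem for meromorphic maps into \emph{compact K\"ahler} manifolds. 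A reliable symptom of the gap is that your sketch never uses the K\"ahler hypothesis --- only compactness --- whereas it is exactly in establishing the transverse plurisubharmonicity that K\"ahlerness enters and cannot be dispensed with. A secondary, related point you should not elide: upper semicontinuity of $\varphi$ at parabolic leaves (where you set $\varphi=-\infty$) is itself nontrivial and comes out of the same covering-tube analysis; your decreasing-limit argument would deliver it only after the fibered statement for the tubes is actually established.
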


Indeed, Brunella proved more than the results stated in the theorem above. By putting on $K_\fc$ precisely the Poincar\'e metric on the hyperbolic leaves, he construct a singular hermitian metric $h$ (maybe not with analytic singularities) on $K_\fc$, such that the set of points where $h$ is unbounded locally are polar set $\text{Sing}(\fc)\cup \text{Parab}(\fc)$, where $\text{Parab}(\fc)$ are the union of parabolic leaves, and the curvature $\Theta_h$ of the metric $h$ is a positive current. Therefore, it seems that Brunella's theorem can be strengthened, i.e., not only $K_\fc$ is pseudo-effective, but also the canonical sheaf $\mathcal{K}_\fc$ is pseudoeffective. However, as is shown in the following example, even if all the leaves of $\fc$ are hyperbolic, the canonical sheaf can not be pseudoeffective.

\begin{example}\label{exam}
A foliation by curves of degree $d$ on the complex projective space $\cb P^n$ is generated by a global section
$$s\in H^0(\cb P^n, T_{\cb P^n} \otimes \oc(d-1)).$$
From the results by Lins Neto
and Soares \cite{LS96} and Brunella \cite{Bru06}, we know that a \emph{generic} one-dimensional foliation $\fc$ of degree $d$ satisfies the following property:
\begin{enumerate}[(a)]
\item the set of the singularities $\text{Sing}(\fc)$ of $\fc$ is discrete;
\item each singularity $p\in \text{Sing}(\fc)$ is non-degenerate, i.e. the Milnor number of $\fc$ at $p$ is 1;
\item no $d+1$ points in $\text{Sing}(\fc)$ lie on a projective line;\label{non linear}
\item all the leaves of $\fc$ are hyperbolic.
\end{enumerate}  Hence the Baum-Bott formula states that
\begin{eqnarray}\nonumber
\#\text{Sing}(\fc)&=&c_n(T_{\cb P^n}+\oc(d-1))\\\nonumber
&=&\sum_{i=0}^{n}c_{n-i}(\cb P^n)c_1(\oc(d-1))^i\\\nonumber
&=&\sum_{i=0}^{n}\dbinom{n+1}{i+1}(d-1)^i\\\nonumber
&=&\frac{d^{n+1}-1}{d-1},
\end{eqnarray}
and thus the canonical sheaf $\mathcal{K}_\fc=\oc(d-1)\otimes \mathcal{I}_{\text{Sing}(\fc)}$, where $\ic_{\text{Sing}(\fc)}$ is the maximal ideal of $\text{Sing}(\fc)$. By property (\ref{non linear}) above it is easy to prove that for $d\gg 0$ $\mathcal{K}_\fc$ is not pseudo-effective.
\end{example}

\begin{rem}
In \cite{McQ08} the author introduces the definition \emph{canonical singularities} for foliations, in dimension 2 this definition is equivalent to \emph{reduced singularities} in the sense of Seidenberg. The generic foliation by curves of degree $d$ in $\cb P^n$ is another example of canonical singularities. In this situation, one can not expect to improve the ``bigness" of canonical sheaf $\mathcal{K}_\fc$ by blowing-up. Indeed, this birational model is ``stable" in the sense that, $\pi_*\mathcal{K}_{\widetilde{\fc}}=\mathcal{K}_\fc$ for any birational model $\pi:(\widetilde{X},\widetilde{\fc})\rightarrow(X,\fc)$. However, on the complex surface endowed with a foliation $\fc$ with reduced singularities, if $f$ is an entire curve tangent to the foliation, and $T[f]$ is the Ahlfors current associated with $f$, then in \cite{McQ98} it is shown that the positivity of $T[f]\cdot T_\fc$ can be improved by an infinite sequence of blowing-ups, due to the fact that certain singularities of $\fc$ appearing in the blowing-ups are ``\emph{small}", i.e. the lifted entire curve will not pass to these singularities. Since $T[f]\cdot T_\fc$ is related to value distribution, and thus these small singularities are negligible. In \cite{Den16} this ``Diophantine approximation" idea has been generalized to higher dimensions. 
\end{rem}

\noindent

\textbf{Acknowledgements}
I would like to thank my supervisor Professor Jean-Pierre Demailly for stimulating exchanges and comments in this work. The author is supported by the China Scholarship Council.

\textsc{Institut Fourier \& University of Science and Technology of China} \\
\textsc{Ya Deng} \\
\verb"Email: Ya.Deng@fourier-grenoble.fr"


\begin{thebibliography}{99}
\bibitem[Bru06]{Bru06}
M. Brunella, Inexistence of invariant measures for generic rational differential equations in the complex domain. Bol. Soc. Mat. Mexicana (3), 2006, 12(1): 43-49.
\bibitem[Bru10]{Bru10}
M. Brunella, Uniformisation of foliations by curves. Holomorphic dynamical systems. Springer Berlin Heidelberg, 2010: 105-163.
\bibitem[Dem12]{Dem12}
J.P. Demailly, Kobayashi pseudo-metrics, entire curves and hyperbolicity of 
algebraic varieties, Lecture notes in Summer School in Mathematics 2012.
\bibitem[Dem14]{Dem14}
J.P. Demailly, Towards the Green-Griffiths-Lang conjecture, arXiv: 1412.2986v2 
[math.AG]
\bibitem[Den16]{Den16}
Y. Deng, Degeneracy of entire curves into higher dimensional complex manifolds, in preparation.
\bibitem[GPR13]{GPR13}
C. Gasbarri, G. Pacienza, E. Rousseau, Higher dimensional tautological inequalities and applications. Mathematische Annalen, 2013, 356(2): 703-735.
\bibitem[Gri71]{Gri71}
P.A. Griffiths, Holomorphic mapping into canonical algebraic varieties. Annals of Mathematics, 1971: 439-458.



\bibitem[KO71]{KO71}
S. Kobayashi, T. Ochiai, Mappings into compact complex manifolds with negative first Chern class. Journal of the Mathematical Society of Japan, 1971, 23(1): 137-148.

\bibitem[Laz04]{Laz04}
R. Lazarsfeld, Positivity in algebraic geometry II. Springer-Verlag. 2004.

\bibitem[LS96]{LS96}
A. Lins Neto, M.G. Soares, Algebraic solutions of one-dimensional foliations. Journal of Differential Geometry, 1996, 43(3): 652-673.
\bibitem[McQ98]{McQ98}
M. McQuillan, Diophantine approximations and foliations. Publications Mathématiques de l'Institut des Hautes Études Scientifiques, 1998, 87(1): 121-174.
\bibitem[McQ08]{McQ08}
M. McQuillan, Canonical models of foliations. Pure and applied mathematics quarterly, 2008, 4(3).

\end{thebibliography}
\end{document}